\newcommand{\Cb}{\mathbb{C}}
\begin{document}

%===============================================================================
%   Front matter --- title, abstract, etc.
%===============================================================================

\author{Ji-Young Ham}
\email{jiyoungham1@gmail.com}
\address{School of Liberal Arts, Seoul National University of Science and Technology,
232 Gongneung-ro, Nowon-gu, Seoul, korea 
01811 \\
Department of Architecture, Konkuk University, 
120 Neungdong-ro, Gwangjin-gu, Seoul, Korea
05029}
%\orcid{Orcid number}

\author{Joongul Lee}
\email{jglee@hongik.ac.kr}
\address{Department of Mathematics Education, Hongik University, 
94 Wausan-ro, Mapo-gu, Seoul, Korea
04066}
%\orcid{Orcid number}

\title[Chern-Simons invariants for double twist knot orbifolds]{Explicit formulae for Chern-Simons invariants of the hyperbolic $J(2n,-2m)$ knot orbifolds}

\abstract{english}{We calculate the Chern-Simons invariants of  the hyperbolic double twist knot orbifolds using the Schl\"{a}fli formula for the generalized Chern-Simons function on the family of cone-manifold structures of double twist knots.}
%\abstract{ukrainian}{Анотація українською мовою.}
%\abstract{russian}{Анотация на русском языке.}

\keywords{Chern-Simons invariant, double twist knot, orbifold, Riley-Mednykh polynomial, orbifold covering}
\shortAuthorsList{J. Ham and J. Lee}
%\udc{Universal Decimal Classification number}
\msc{57M25,  57M27.}
\thanks{This work was supported by Basic Science Research Program through the National Research Foundation of Korea (NRF) funded by the Ministry of Education, Science and Technology (No. NRF-2018R1A2B6005847). The second author was supported by 2019 Hongik University Research Fund. The authors would like to thank Alexander Mednykh, Nathan Dunfield, and Darren Long.}

\maketitle

%===============================================================================
%   Text of the article
%===============================================================================
\section{Introduction}
Chern-Simons invariants of hyperbolic knot orbifolds are computed explicitly for a few infinite families in~\cite{HL,HL2,HLMR} using the ``Schl\"{a}fli formula''. 

In this paper, we present the explicit formulae for Chern-Simons invariants of the hyperbolic double twist knot orbifolds and we present them numerically for some of double twist knot orbifolds. A brief history of Chern-Simons invariant  can be found in~\cite{HL,HL2,HLMR}. A double twist knot is denoted by $C(2n,2m)$ according to Conway notation or by $J(2n,-2m)$ according to Hoste-Shanahan notation. Figure~\ref{sec:genOne} presents $C(2n,2m)$ for $m, n >0$. 

For a two-bridge hyperbolic link, there exists an angle $\alpha_0 \in [\frac{2\pi}{3},\pi)$ for each link $K$ such that the cone-manifold $K(\alpha)$ is hyperbolic for $\alpha \in (0, \alpha_0)$, Euclidean for $\alpha=\alpha_0$, and spherical for $\alpha \in (\alpha_0, \pi]$ \cite{P2,HLM1,K1,PW}. We will use the Chern-Simons invariant of the lens space $L\left(4nm+1,2n(2m-1)+1\right)$ calculated in~\cite{HLM2}.
The following theorem gives the Chern-Simons invariant formulae  for the hyperbolic $C(2n,2m)$ knots.
Let $S_k(v)$ be the 
\emph{Chebychev polynomials} defined by $S_0(v) = 1$, $S_1(v) = v$ and $S_k(v) = vS_{k-1}(v) - S_{k-2}(v)$ for all integers $k$.

\begin{theorem}\label{theorem:main}
Let $X_{2n}^{2m} \!  (\alpha)$  be the hyperbolic cone-manifold with underlying space $S^3$ and a singular set $C(2n,2m)$ of cone-angle $0 \leq \alpha < \alpha_0$. Let $k$ be a positive integer such that $k$-fold cyclic covering of 
$X_{2n}^{2m}(\frac{2 \pi}{k})$ is hyperbolic. Then the Chern-Simons invariant of $X_{2n}^{2m}(\frac{2 \pi}{k})$ (mod $\frac{1}{k}$ if $k$ is even or mod $\frac{1}{2k}$ if $k$ is odd) is given by the following formula:

{\small
\begin{align*}
&\text{\textnormal{cs}} \left(X_{2n}^{2m} \left(\frac{2 \pi}{k} \right)\right) 
 \equiv \frac{1}{2} \text{\textnormal{cs}}\left(L\left(4nm+1,2n(2m-1)+1\right) \right) \\
&+\frac{1}{4 \pi^2}\int_{\frac{2 \pi}{k}}^{\alpha_0} Im \left(2*\log \left(-\frac{M^2 (S_n(v)-S_{n-1}(v))-(S_{n-1}(v)-S_{n-2}(v))}{(S_n(v)-S_{n-1}(v))-M^2 (S_{n-1}(v)-S_{n-2}(v))}\right)\right) \: d\alpha \\
& +\frac{1}{4 \pi^2}\int_{\alpha_0}^{\pi}
 Im \left(\log \left(-\frac{M^2 (S_n(v_1)-S_{n-1}(v_1))-(S_{n-1}(v_1)-S_{n-2}(v_1))}{(S_n(v_1)-S_{n-1}(v_1))-M^2 (S_{n-1}(v_1)-S_{n-2}(v_1))}\right) \right) \: d\alpha\\
& +\frac{1}{4 \pi^2}\int_{\alpha_0}^{\pi}
 Im \left(\log \left(-\frac{M^2 (S_n(v_2)-S_{n-1}(v_2))-(S_{n-1}(v_2)-S_{n-2}(v_2))}{(S_n(v_2)-S_{n-1}(v_2))-M^2 (S_{n-1}(v_2)-S_{n-2}(v_2))}\right)\right) \: d\alpha,
\end{align*}
}
\noindent where 
for $M=e^{\frac{i \alpha}{2}}$, 
$x$ , $x_1$, and $x_2$ are zeroes of Riley-Mednykh polynomial $\phi_{2n}^{2m} (x,M)$ in \text{\textnormal{Theorem}}~\ref{thm:RMpolynomial}.
As $\alpha$ decreases to $\alpha_0$ both $x_1$ and $x_2$ approach a common value $x$. One of $x_1$ and $x_2$ comes from the component of $x$, and the other comes from the component of $\bar{x}$.
$v$ satisfies $\text{\textnormal{Im}} ((S_n(v)-S_{n-1}(v))\overline{(S_{n-1}(v)-S_{n-2}(v))}) \geq 0$~\cite[Lemma 3.9]{Tran3}.
\end{theorem}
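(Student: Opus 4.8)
\emph{Proof proposal.} The plan is to realize the whole family $\{X_{2n}^{2m}(\alpha)\}$ as a one–parameter family of $SL(2,\C)$ (equivalently $PSL(2,\C)$) representations of the knot group, and then to compute the Chern--Simons invariant by integrating the Schl\"afli variation formula along this family, fixing the constant of integration at the spherical endpoint $\alpha=\pi$. First I would take the Hoste--Shanahan presentation of the group of $T_{2n}^{2m}$ and send the two meridians to the standard upper/lower triangular pair with eigenvalue $M=e^{i\alpha/2}$, so that the cone angle is recorded by $M$. Writing $v$ for the trace of the relevant word $w$ and using the Cayley--Hamilton identity $w^{k}=S_{k-1}(v)\,w-S_{k-2}(v)\,I$, the group relation becomes precisely the vanishing of the Riley--Mednykh polynomial $\phi_{2n}^{2m}(x,M)$ of Theorem~\ref{thm:RMpolynomial}, with $x=v-M^2-M^{-2}$. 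The same Chebyshev expansion lets me write the eigenvalue $L$ of the holonomy of the preferred longitude as the ratio
\[
L \;=\; -\,\frac{M^{2}\bigl(S_n(v)-S_{n-1}(v)\bigr)-\bigl(S_{n-1}(v)-S_{n-2}(v)\bigr)}{\bigl(S_n(v)-S_{n-1}(v)\bigr)-M^{2}\bigl(S_{n-1}(v)-S_{n-2}(v)\bigr)},
\]
which is exactly the quantity appearing inside every logarithm in the statement.

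Next I would invoke the generalized Chern--Simons function of Hilden, Lozano and Montesinos-Amilibia on the space of cone-manifold structures, together with its Schl\"afli-type variation formula, as used in~\cite{HL,HL2,HLMR}. The variation of $\mathrm{cs}$ with respect to the cone angle is governed by the complex length of the singular geodesic, whose rotational part is the imaginary part of $2\log L$ (the eigenvalue of the longitude holonomy being $L$, the complex length being $2\log L$). This gives, along the hyperbolic branch,
\[
\frac{d}{d\alpha}\,\mathrm{cs}\bigl(X_{2n}^{2m}(\alpha)\bigr)\;=\;\frac{1}{4\pi^{2}}\,\mathrm{Im}\bigl(2\log L(\alpha)\bigr),
\]
and integrating over the hyperbolic range from $\tfrac{2\pi}{k}$ up to the Euclidean angle $\alpha_0$ produces the first integral of the formula.

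Then I would fix the additive constant and deal with the spherical range. At $\alpha=\pi$ the cone-manifold is the spherical orbifold whose $2$-fold branched cover is the lens space $L(4nm+1,2n(2m-1)+1)$; by the behaviour of the Chern--Simons invariant under orbifold covering this yields $\mathrm{cs}(X_{2n}^{2m}(\pi))=\tfrac12\,\mathrm{cs}(L(4nm+1,2n(2m-1)+1))$, the value computed in~\cite{HLM2}, which is the seed term. As $\alpha$ decreases from $\pi$ the geometric solution is no longer represented by a single pair: above $\alpha_0$ the two roots $v_1,v_2$ are distinct and each carries its own longitude eigenvalue, so the variation formula contributes two separate $\log$ terms, which are exactly the second and third integrals. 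The hypothesis that $x_1,x_2$ coalesce to the common $x$ as $\alpha\downarrow\alpha_0$, with $v_1,v_2$ arising from the components of $x$ and $\overline{x}$, guarantees that the three pieces glue continuously at $\alpha_0$ with no hidden jump in the constant. Assembling the seed term at $\pi$ with the three integrals, integrated from $\pi$ down to $\tfrac{2\pi}{k}$ and reading the answer modulo the indeterminacy $\tfrac1k$ (resp. $\tfrac1{2k}$) inherited from the $k$-fold cyclic covering, gives the stated formula.

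The main obstacle I anticipate is pinning down the Schl\"afli formula with its correct normalization and sign, and in particular justifying the precise combination (the single doubled $2\log$ term in the hyperbolic range versus the two separate $\log$ terms in the spherical range) across the Euclidean transition at $\alpha_0$. This requires controlling the branch of the complex logarithm and its imaginary part uniformly in $\alpha$, using the sign condition $\mathrm{Im}\bigl((S_n(v)-S_{n-1}(v))\overline{(S_{n-1}(v)-S_{n-2}(v))}\bigr)\ge 0$ from~\cite[Lemma 3.9]{Tran3}, and verifying that the limits of $v_1,v_2$ match the geometric $v$ so that the integration constant transports consistently from the spherical seed into the hyperbolic range.
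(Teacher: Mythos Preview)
Your proposal follows essentially the same route as the paper: set up the Hoste--Shanahan representation, derive the Riley--Mednykh polynomial and the closed longitude eigenvalue formula (the paper's Theorem~\ref{thm:longitude}), apply the Schl\"afli formula $dI=-\tfrac{1}{4\pi^{2}}\beta\,d\alpha$ of~\cite{HLM2}, integrate from $\tfrac{2\pi}{k}$ to $\pi$, and use the lens-space value at $\alpha=\pi$ as the seed together with the splitting at $\alpha_0$ into the hyperbolic branch (one doubled $\log$) and the spherical branch (two separate $\log$'s). Two small slips to fix: in this paper $v=\mathrm{tr}(T^{-1}S)$, not $\mathrm{tr}(w)$ (the latter is $z$); and the sign in your variation formula is reversed relative to the paper's $dI=-\tfrac{1}{4\pi^{2}}\beta\,d\alpha$, which is exactly the normalization issue you flagged.
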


%%%%%%%%%%%%%%%%%%%%%%%%%%%%%%%%%%%%%%%%%%%%%%%%%%%%%%%%
%%%%%%%%%%%%%%%%%%%%%%%%%%%%%%%%%%%%%%%%%%%%%%%%%%%%%%%%

\section{$C(2n,2m)$ knots} \label{sec:genOne}
\begin{figure} 
\begin{center}
\resizebox{3.5cm}{!}{\includegraphics[angle=0]{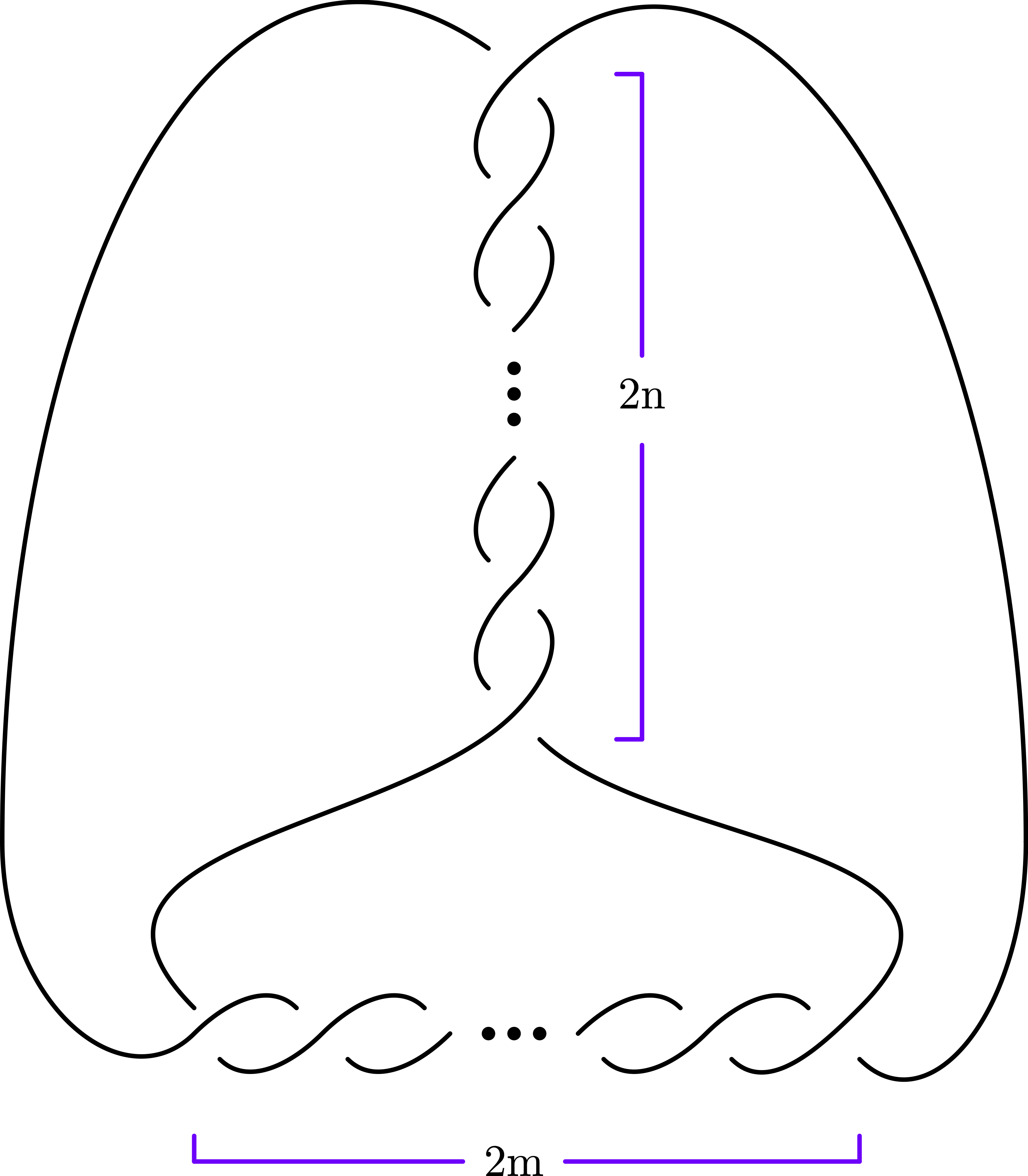}}
\caption{  C(2n,2m)} 
\label{fig:J[2n,2m]}
\end{center}
\end{figure}

A general reference for this section is ~\cite{HS}.
A knot with $2 n$ right-handed vertical crossings and $2m$ left-handed horizontal crossings as in Figure~\ref{fig:J[2n,2m]} is $C(2n, 2m)$ knot according to Conway's notation. 
One can easily check that the slope of $C(2n,2m)$ is $2m/(4nm+1)$ which is equivalent to the knot with slope 
$\left(2n(2m-1)+1\right)/(4nm+1)$~\cite{S1}.  

We will use the following presentation of the fundamental group of $C(2n,2m)$ knot(equivalently, $J(2n,-2m)$ knot) in~\cite{HS}. 
In~\cite{HS}, Hoste and Shanahan asked whether their presentation of the fundamental group for double twist knots can be derived from Schubert’s canonical 2-bridge diagram or not. The following proposition can also be obtained by reading off the fundamental group from the Schubert normal form of $C(2n,2m)$ with slope $2m/(4nm+1)$~\cite{S1,R1} which answers Hoste-Shanahan's question completely for $C(2n,2m)$ knots. 
Let$X_{2n}^{2m}$ be $S^3 \backslash C(2n,2m)$.
\begin{proposition}~\cite[Proposition2.2]{HLMR} ~\cite{R1, S1}\label{theorem:fundamentalGroup}
$$\pi_1(X_{2n}^{2m})=\left\langle s,t \ |\ sw^mt^{-1}w^{-m}=1\right\rangle,$$
where $w=(t^{-1}s)^n(ts^{-1})^n$.
\end{proposition}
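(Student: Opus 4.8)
The plan is to derive the presentation directly from Schubert's canonical two-bridge diagram, so that the assertion about Hoste--Shanahan's presentation comes out as a consequence. Since the slope of $T_{2n}^{2m}$ is $2m/(4nm+1)$, the knot is the two-bridge knot $\mathfrak{b}(4nm+1,2m)$; here $\gcd(4nm+1,2m)=\gcd(1,2m)=1$, so this is genuinely a knot. Applying the standard reduction of the Wirtinger presentation of a two-bridge normal form to Riley's two-generator one-relator form \cite{S1,R1}, one obtains a presentation $\langle s,t \mid s\,\Omega = \Omega\, t\rangle$, where $s,t$ are meridians and $\Omega$ is the length-$4nm$ word whose $j$-th letter is a power of $t$ (for $j$ odd) or of $s$ (for $j$ even), the exponents being governed by $\epsilon_j=(-1)^{\lfloor 2jm/(4nm+1)\rfloor}$. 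Since the target relation $sw^mt^{-1}w^{-m}=1$ is equivalent to $sw^m = w^m t$, it suffices to prove that $\Omega = w^m$ as words, with $w=(t^{-1}s)^n(ts^{-1})^n$.

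The first step is a purely number-theoretic lemma describing the sequence $\epsilon_j$. Solving $2jm/(4nm+1)=k$ gives $j = 2nk + k/(2m)$, and since $0 < k/(2m) < 1$ for $1\le k\le 2m-1$, the quantity $\lfloor 2jm/(4nm+1)\rfloor$ first reaches the value $k$ at $j = 2nk+1$. Hence $\lfloor 2jm/(4nm+1)\rfloor = k$ on the block $2nk+1 \le j \le 2n(k+1)$, for $k=0,1,\dots,2m-1$: the index range $1,\dots,4nm$ splits into $2m$ consecutive blocks, each of length $2n$, on which $\epsilon_j$ is constant and equal to $(-1)^k$, alternating $+,-,+,-,\dots$ from block to block. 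In particular the exponent pattern has period two blocks, i.e.\ period $4n=|w|$, and the $m$ pairs of blocks will account for the $m$ copies of $w$; the lengths match, since $2m$ blocks of length $2n$ give $4nm = |w^m|$.

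It then remains to assemble the word. Each block of length $2n$ contains $n$ letters that are powers of $t$ (odd positions) and $n$ that are powers of $s$ (even positions). Incorporating the even-slope correction flagged in the text---namely that, to keep the two meridians $s$ and $t$ coherently oriented, the powers of $t$ carry $-\epsilon_j$ while the powers of $s$ carry $\epsilon_j$, which together amount to giving the letter at position $j$ the exponent $(-1)^j\epsilon_j$---a block with $\epsilon_j=+1$ reads $(t^{-1}s)^n$ and a block with $\epsilon_j=-1$ reads $(ts^{-1})^n$. A plus-block immediately followed by a minus-block therefore spells exactly $w=(t^{-1}s)^n(ts^{-1})^n$, and running over all $m$ pairs yields $\Omega = w^m$, as desired. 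I expect the main obstacle to be this last sign bookkeeping rather than the number theory: one must justify carefully, from the over/under pattern of Schubert's diagram for even $q$, why the orientation convention forces the replacement $\epsilon_j \mapsto -\epsilon_j$ on the $t$-letters, since the classical Riley word is usually recorded only for odd $q$. Once the correction is verified on the diagram, the identification $\Omega=w^m$ is immediate, and comparison with the word of \cite{HS} confirms that their presentation indeed arises from the canonical two-bridge diagram.
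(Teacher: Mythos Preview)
Your proposal is correct and follows precisely the route the paper indicates. In the paper, Proposition~\ref{theorem:fundamentalGroup} is not proved in detail: the presentation is quoted from \cite{HS}, and the authors simply remark that it can also be read off from the Schubert normal form for slope $2m/(4nm+1)$, flagging the even-slope sign correction $-\epsilon_j$ on the $t$-letters. Your argument carries out exactly this derivation---identifying the $2m$ constant blocks of length $2n$ in the sequence $\epsilon_j$ and assembling the Riley word as $w^m$---and your acknowledged residual step (verifying the $-\epsilon_j$ convention from the over/under pattern of the Schubert diagram for even $q$) is the same point the paper leaves as a caution rather than justifies.
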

%%%%%%%%%%%%%%%%%%%%%%%%%%%%%%%%%%%%%%%%%%%%%%%%%%%%%%%%%%%%%
%%%%%%%%%%%%%%%%%%%%%%%%%%%%%%%%%%%%%%%%%%%%%%%%%%%%%%%%%%%%%
\subsection{The Chebychev polynomial}\label{subsec:cheby}
Let $S_k(v)$ be the \emph{Chebychev polynomials} defined by $S_0(v) = 1$, $S_1(v) = v$ and $S_k(v) = vS_{k-1}(v) - S_{k-2}(v)$ for all integers $k$.
The following explicit formula for $S_k(v)$ can be obtained by solving the above recurrence relation~\cite{Tran2}.
\begin{align*}
S_n(v)=\sum_{0 \leq i \leq \lfloor \frac{n}{2} \rfloor} (-1)^i \binom{n-i}{i}v^{n-2i}
\end{align*}
for $n \geq 0$, $S_n(v) = -S_{-n-2}(v)$ for $n \leq -2$, and $S_{-1}(v) = 0$.
The following proposition~\ref{prop:cheby} can be proved using the Cayley-Hamilton theorem~\cite{Tran}.
\begin{proposition}~\cite[Proposition 2.4]{Tran}\label{prop:cheby}
Suppose $V=
\begin{bmatrix}
a & b \\ c & d
\end{bmatrix}
\in \textnormal{SL}_2(\Cb).$
Then
\begin{equation*}
V^k=
\begin{bmatrix}
S_k(v)-d S_{k-1}(v) & b S_{k-1} (v) \\ c S_{k-1} (v) & S_k(v)-a S_{k-1} (v)
\end{bmatrix}
\end{equation*}
where $v=\textnormal{tr}(V)=a+d$.
\end{proposition}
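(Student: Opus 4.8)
The plan is to prove the formula by induction on $k$, with the Cayley--Hamilton theorem furnishing the crucial two-term recurrence that matches the defining recurrence for the Chebychev polynomials. Since $V \in \textnormal{SL}_2(\C)$ has $\det V = 1$ and trace $v = a+d$, its characteristic polynomial is $\lambda^2 - v\lambda + 1$, so Cayley--Hamilton gives $V^2 - vV + I = 0$, that is, $V^2 = vV - I$. Multiplying through by $V^{k-2}$ yields the matrix recurrence $V^k = v\,V^{k-1} - V^{k-2}$, valid for every integer $k$. This is exactly the recurrence $S_k(v) = vS_{k-1}(v) - S_{k-2}(v)$ satisfied by the claimed entries, which strongly suggests that a clean induction will close the proof.

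First I would verify the two base cases. For $k=0$ the claimed matrix is $\begin{bmatrix} S_0(v) - dS_{-1}(v) & bS_{-1}(v) \\ cS_{-1}(v) & S_0(v) - aS_{-1}(v)\end{bmatrix}$; using $S_0(v) = 1$ and $S_{-1}(v) = 0$ this collapses to the identity matrix $I = V^0$. For $k=1$, using $S_1(v) = v$ and $S_0(v) = 1$, the claimed matrix has diagonal entries $v - d = a$ and $v - a = d$ and off-diagonal entries $b$ and $c$, so it equals $V$. Both base cases therefore hold.

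For the inductive step I would assume the formula for the exponents $k-1$ and $k-2$ and substitute into $V^k = vV^{k-1} - V^{k-2}$. Computing entry by entry, the $(1,1)$-entry becomes $v\bigl(S_{k-1}(v) - dS_{k-2}(v)\bigr) - \bigl(S_{k-2}(v) - dS_{k-3}(v)\bigr) = \bigl(vS_{k-1}(v) - S_{k-2}(v)\bigr) - d\bigl(vS_{k-2}(v) - S_{k-3}(v)\bigr) = S_k(v) - dS_{k-1}(v)$, where the last equality is the Chebychev recurrence applied to the two brackets. The remaining three entries follow by the identical manipulation, with $a$, $b$, or $c$ replacing $d$. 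Since the recurrence $V^k = vV^{k-1} - V^{k-2}$ runs in both directions, the same argument propagates the formula to negative exponents as well; equivalently one checks $k=-1$ directly using $S_{-1}(v)=0$ and $S_{-2}(v)=-1$, so that the identity holds for all integers $k$ as asserted in the statement of the Chebychev recurrence.

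The argument is essentially bookkeeping once the Cayley--Hamilton recurrence is in hand, so I do not anticipate a genuine obstacle. The only point requiring care is consistent use of the extended index conventions $S_{-1}(v)=0$ and $S_n(v)=-S_{-n-2}(v)$ when pushing the induction toward non-positive $k$, since a sign error there would silently corrupt the off-diagonal entries.
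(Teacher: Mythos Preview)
Your proof is correct and follows precisely the approach the paper indicates: the paper does not spell out a proof but merely remarks that the proposition ``can be proved using the Cayley--Hamilton theorem,'' citing Tran. Your induction built on the Cayley--Hamilton recurrence $V^k = vV^{k-1} - V^{k-2}$ is exactly what is intended.
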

%%%%%%%%%%%%%%%%%%%%%%%%%%%%%%%%%%%%%%%%%%%%%%%%%%%%%%%%%%%%%
%%%%%%%%%%%%%%%%%%%%%%%%%%%%%%%%%%%%%%%%%%%%%%%%%%%%%%%%%%%%%
\subsection{The Riley-Mednykh polynomial}
Let
\begin{center}
$$\begin{array}{ccccc}
\rho(s)=\left[\begin{array}{cc}
                       M &       1 \\
                        0      & M^{-1}  
                     \end{array} \right]                          
\text{,} \ \ \
\rho(t)=\left[\begin{array}{cc}
                   M &  0      \\
                   2-v      & M^{-1} 
                 \end{array}  \right],
\end{array}$$
\end{center}
and let
 \begin{center}
$$\begin{array}{cc}
c=\left[\begin{array}{cc}
        0 & -\left(\sqrt{2-v }\right)^{-1}     \\
       \sqrt{2-v} & 0
       \end{array}  \right].
\end{array}$$
\end{center}

Then from the above Proposition~\ref{prop:cheby}, we get the following Theorem~\ref{thm:RMpolynomial}. 
Let $\rho(s)=S$, $\rho(t)=T$ and 
$\rho(w)=W$. 
Then $\textnormal{tr}(T^{-1}S)=v=\textnormal{tr}(TS^{-1})$. Let $v=x+M^2+M^{-2}$. Theorem~\ref{thm:RMpolynomial} can be found in~\cite{Tran3}. We include the proof for readers' convenience.

\begin{theorem}~\cite{Tran3} \label{thm:RMpolynomial}
$\rho$ is a representation of $\pi_1(X_{2n}^{2m})$ if and only if $x$ is a root of the following Riley-Mednykh polynomial,
\begin{align*}
\phi_{2n}^{2m} (x,M)=S_m(z)+\left[-1+x  S_{n-1}(v)\left(S_n(v)+(1-v) S_{n-1}(v)\right)\right]S_{m-1}(z).
\end{align*}
\end{theorem}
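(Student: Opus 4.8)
The plan is to convert the single defining relation of Proposition~\ref{theorem:fundamentalGroup} into a matrix identity and then determine exactly when it holds. A map sending $s,t$ to $S,T$ extends to a representation $\rho$ of $\pi_1(X_{2n}^{2m})$ precisely when it kills the relator $sw^mt^{-1}w^{-m}$, i.e.\ when $SW^m=W^mT$, where $W=\rho(w)=(T^{-1}S)^n(TS^{-1})^n$. First I would compute the two base blocks $P:=T^{-1}S$ and $Q:=TS^{-1}$ explicitly; a direct multiplication shows both lie in $\mathrm{SL}_2(\C)$ with $\operatorname{tr}P=\operatorname{tr}Q=v$. This is exactly what makes Proposition~\ref{prop:cheby} applicable, and it expresses $P^n$ and $Q^n$ as $2\times2$ matrices whose entries are linear combinations of $S_n(v)$ and $S_{n-1}(v)$.

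Next I would multiply $W=P^nQ^n$ and record its four entries. Writing $A=S_n(v)-(v-1)S_{n-1}(v)$ and $D=S_n(v)-S_{n-1}(v)$ for the shared diagonal entries of $P^n$ and $Q^n$, one finds after the product that the off-diagonal entries obey the clean relation $W_{21}=(2-v)W_{12}$. Setting $z=\operatorname{tr}W=W_{11}+W_{22}$ and applying Proposition~\ref{prop:cheby} once more yields $W^m$ with entries linear in $S_m(z)$ and $S_{m-1}(z)$; in particular its $(1,2)$ and $(2,1)$ entries are $W_{12}S_{m-1}(z)$ and $W_{21}S_{m-1}(z)$, and its $(2,2)$ entry is $S_m(z)-W_{11}S_{m-1}(z)$.

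I would then expand the matrix identity $SW^m=W^mT$ into its four scalar equations. The $(2,2)$ equation is automatic; the $(1,1)$ equation reduces to $W_{21}=(2-v)W_{12}$, which already holds identically; and the $(2,1)$ equation is a formal consequence of the $(1,1)$ and $(1,2)$ equations. Hence the \emph{only} genuine constraint is the $(1,2)$ equation, which rearranges to $S_m(z)=\big(W_{11}+(M^{-1}-M)W_{12}\big)S_{m-1}(z)$. Because the $(1,1)$ equation is an identity, no case distinction (for instance $S_{m-1}(z)=0$) is ever required, so the resulting equivalence is exact in both directions — giving the ``if and only if'' of the statement.

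The heart of the proof, and the one genuinely delicate step, is the algebraic identity
$$W_{11}+(M^{-1}-M)\,W_{12}=1-x\,S_{n-1}(v)\big(S_n(v)+(1-v)S_{n-1}(v)\big),$$
which converts the $(1,2)$ condition into $\phi_{2n}^{2m}(x,M)=0$. I would prove it by substituting the computed entries, grouping the result by powers of $M$, using $M^2+M^{-2}=v-x$, and repeatedly applying $S_n(v)=vS_{n-1}(v)-S_{n-2}(v)$. The $M^{\pm2}$-terms collapse to a common multiple of $A=S_n(v)+(1-v)S_{n-1}(v)$, and the $M^0$-term simplifies, via the classical relation $S_{n-1}(v)^2-S_n(v)S_{n-2}(v)=1$, so that everything cancels except the asserted right-hand side. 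Carrying out these cancellations cleanly is the main obstacle; every other step is a mechanical invocation of Proposition~\ref{prop:cheby}.
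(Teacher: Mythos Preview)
Your argument is correct and lands on exactly the same scalar condition as the paper, namely
\[
S_m(z)-W_{11}S_{m-1}(z)+(M-M^{-1})W_{12}S_{m-1}(z)=0,
\]
but you reach it by a slightly different device. The paper introduces the involution
\[
c=\begin{pmatrix}0&-(\sqrt{2-v})^{-1}\\ \sqrt{2-v}&0\end{pmatrix},
\]
which conjugates $S$ to $T^{-1}$; the relator then becomes equivalent to $\operatorname{tr}(SW^mc)=0$, and that trace, divided by $\sqrt{2-v}$, is precisely the expression above. You instead expand $SW^m=W^mT$ entrywise and observe that three of the four scalar equations are redundant (via $W_{21}=(2-v)W_{12}$), isolating the same $(1,2)$-equation directly. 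Your route is more elementary and self-contained; the $c$-trick is quicker and explains \emph{a priori} why a single polynomial suffices. Finally, where the paper simply cites \cite{HLMR} for the simplification $W_{11}-(M-M^{-1})W_{12}=1-xS_{n-1}(v)\bigl(S_n(v)+(1-v)S_{n-1}(v)\bigr)$, you outline the Chebychev manipulations that verify it, which is a nice addition.
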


\begin{proof}
Since
\begin{align*}
T^{-1}S &=\left[\begin{array}{cc}
                       1 &      M^{-1}  \\
                       M( M^{-2}+(x-2) +M^2)\ \ \ \ \    & M^{-2} +x-1+M^2 
                     \end{array} \right]  \text{ and} \\
T S^{-1} &=\left[\begin{array}{cc}
                   1  &  -M      \\
                   -M^{-1} (M^{-2}+(x-2) +M^2)    & M^{-2} +x-1+M^2 
                 \end{array}  \right],
\end{align*}

\begin{align*}
(T^{-1} S)^n &=\left[\begin{array}{cc}
                       S_n(v)-(v-1) S_{n-1}(v) &      M^{-1} S_{n-1}(v) \\
                        M (v-2) S_{n-1}(v)  \ \ \ \ \  & S_n(v)-S_{n-1}(v)  
                     \end{array} \right] , \\    
(T S^{-1})^n &=\left[\begin{array}{cc}
                       S_n(v)-(v-1) S_{n-1}(v) &      -M S_{n-1}(v) \\
                        -M^{-1} (v-2) S_{n-1}(v)     & S_n(v)-S_{n-1}(v)  
                     \end{array} \right]     
\end{align*}

Hence 
\begin{equation*}
W=(T^{-1} S)^n (T S^{-1})^n 
=\begin{bmatrix}
W_{11} & W_{12} \\ (2-v) W_{12} & W_{22}
\end{bmatrix}
\ \text{where}
\end{equation*}

\begin{align*}
W_{11} & =S^2_n(v)+(2-2v) S_n(v) S_{n-1}(v)+(1+2 M^{-2}-2v-M^{-2}v+v^2) S^2_{n-1}(v) \\
W_{12} & =(M^{-1}-M) S_n(v) S_{n-1}(v)+(M v-M-M^{-1}) S^2_{n-1}(v)\\
W_{22} &=S^2_n(v)-2 S_n(v) S_{n-1}(v)+(1+2 M^2-M^2 v) S^2_{n-1}(v).
\end{align*}

%Let $y=\text{Tr}(W)-M^{-2}-M^2$ and $z=y+M^{-2}+M^2$.
Let $z=\text{Tr}(W)$. Then, since $S_n^2 (v) -vS_n(v)S_{n-1}(v) + S_{n-1}^2(v) = 1$ (by~\cite[Lemma 2.1]{Tran1} or by induction),

\begin{align*}
z=W_{11}+W_{22} &=2(S^2_n(v)-v S_n(v) S_{n-1}(v)+S^2_{n-1}(v))\\
&+(2 M^{-2}+2 M^2-2v-M^{-2}v-M^2 v+v^2)S^2_{n-1}(v)\\
&=2+(v-2) x S^2_{n-1}(v).
\end{align*}

By Proposition~\ref{prop:cheby}, we have
\begin{align*}
(W)^m &=\left[\begin{array}{cc}
                       S_m(z)-W_{22} S_{m-1}(z) &     W_{12}  S_{m-1}(z) \\
                         (2-v) W_{12} S_{m-1}(z)  \ \ \ \ \  & S_m(z)-W_{11} S_{m-1}(z)  
                     \end{array} \right]. 
\end{align*}                       

Therefore $$\textnormal{tr}(SW^mc)/\sqrt{2-v}\left(=(M-M^{-1}) W_{12}  S_{m-1}(z) +S_m(z)-W_{11} S_{m-1}(z)\right)$$ gives 
$\phi_{2n}^{2m} (x,M)$~\cite{HLMR}.
\end{proof}
%%%%%%%%%%%%%%%%%%%%%%%%%%%%%%%%%%%%%%%%%%%%%%%%%%%%%%%%%%%%%
%%%%%%%%%%%%%%%%%%%%%%%%%%%%%%%%%%%%%%%%%%%%%%%%%%%%%%%%%%%%%
\section{Longitude}
\label{sec:longitude}
   Let $l = w^m(w^{*})^{m}$, where $w^{*}$ is the word obtained by reversing $w$. Then $l$ is the longitude which is null-homologus in $X_{2n}^{2m}$. Recall $\rho(w)=W$. Let $\widetilde{W}=\rho(w^{*})$. It is easy to see that $\widetilde{W}$ can be written as
   
$$\widetilde{W}=
\begin{bmatrix}
\tilde{W}_{22} & \tilde{W}_{12} \\ (2-v) \tilde{W}_{12} & \tilde{W}_{11}
\end{bmatrix}
$$
where $\tilde{W}_{ij}$ is obtained by $W_{ij}$ by replacing $M$ with $M^{-1}$. Similar computation was introduced in~\cite{HS}. Hence,

\begin{align*}
\tilde{W}_{11} & =S^2_n(v)+(2-2v) S_n(v) S_{n-1}(v)+(1+2 M^{2}-2v-M^{2}v+v^2) S^2_{n-1}(v) \\
\tilde{W}_{12} & =(M-M^{-1}) S_n(v) S_{n-1}(v)+(M^{-1} v-M^{-1}-M) S^2_{n-1}(v)\\
\tilde{W}_{22} &=S^2_n(v)-2 S_n(v) S_{n-1}(v)+(1+2 M^{-2}-M^{-2} v) S^2_{n-1}(v).
\end{align*}

The following lemma was introduced in~\cite{HS} with slightly different coordinates. 

Let $L=\rho(l)_{11}$(the left upper entry of $\rho(l)$). 
\begin{lemma}~\cite{HS}
\label{lem:lemma}
$W_{21} L+\tilde{W}_{21}=0$.
\end{lemma}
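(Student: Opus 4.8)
The plan is to reduce the whole statement to the single scalar identity $L=-\widetilde{W}_{12}/W_{12}$, and then to obtain this from two linear relations among the entries of $W^m$ and $\widetilde{W}^m$. First, since $2-v$ is a common factor of $W_{21}=(2-v)W_{12}$ and $\widetilde{W}_{21}=(2-v)\widetilde{W}_{12}$, the claim $W_{21}L+\widetilde{W}_{21}=0$ is equivalent to $W_{12}L+\widetilde{W}_{12}=0$. Writing $p_{ij}=(W^m)_{ij}$ and $q_{ij}=(\widetilde{W}^m)_{ij}$, the formula for $(W)^m$ in the proof of Theorem~\ref{thm:RMpolynomial} gives $p_{12}=W_{12}S_{m-1}(z)$ and $q_{12}=\widetilde{W}_{12}S_{m-1}(z)$, while $\rho(l)=W^m\widetilde{W}^m$ gives $L=p_{11}q_{11}+p_{12}q_{21}$. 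Hence it suffices to prove $L=-q_{12}/p_{12}$.

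Next I extract the two key relations. The defining relation $sw^mt^{-1}w^{-m}=1$ gives $S\,W^m=W^m\,T$; comparing $(1,2)$ entries with the explicit $S,T$ yields $p_{22}=(M^{-1}-M)p_{12}$, and comparing $(1,1)$ entries recovers $p_{21}=(2-v)p_{12}$. For $\widetilde{W}^m$ I use the twin relation $T\,\widetilde{W}^m=\widetilde{W}^m\,S$, whose $(1,2)$ entry gives $q_{11}=(M-M^{-1})q_{12}$. Substituting these together with $q_{21}=(2-v)q_{12}$ into $L=p_{11}q_{11}+p_{12}q_{21}$ gives $L=q_{12}\bigl[(M-M^{-1})p_{11}+(2-v)p_{12}\bigr]$, while expanding $\det(W^m)=p_{11}p_{22}-p_{12}p_{21}=1$ with the same relations gives $(M-M^{-1})p_{11}+(2-v)p_{12}=-1/p_{12}$. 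Therefore $L=-q_{12}/p_{12}=-\widetilde{W}_{12}/W_{12}$, and multiplying through by $W_{21}=(2-v)W_{12}$ finishes the proof. Everything here is a short linear computation once the two relations are in hand.

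The main obstacle is justifying the twin relation $T\,\widetilde{W}^m=\widetilde{W}^m\,S$, i.e. the relation $q_{11}=(M-M^{-1})q_{12}$. One route is the symmetry $s\leftrightarrow t$, which carries $w=(t^{-1}s)^n(ts^{-1})^n$ to exactly $w^*=(s^{-1}t)^n(st^{-1})^n$ and hence the relator $sw^mt^{-1}w^{-m}$ to $t(w^*)^ms^{-1}(w^*)^{-m}$; being realized by a symmetry of the two-bridge diagram it descends to $\pi_1(X_{2n}^{2m})$, so $t(w^*)^m=(w^*)^ms$ holds and $\rho$ sends it to $T\,\widetilde{W}^m=\widetilde{W}^m\,S$. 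The route I would actually commit to avoids this symmetry altogether: because $l$ is the longitude it commutes with the meridian $s$, so $\rho(l)=W^m\widetilde{W}^m$ commutes with $S$ and is upper triangular, and the vanishing of its $(2,1)$ entry, $p_{21}q_{11}+p_{22}q_{21}=0$, combined with $p_{21}=(2-v)p_{12}$ and $p_{22}=(M^{-1}-M)p_{12}$, yields precisely $q_{11}=(M-M^{-1})q_{12}$. Thus the only genuine input is peripheral commutativity; the remainder is the bookkeeping of the two paragraphs above.
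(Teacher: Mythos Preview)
Your argument is correct. The paper itself gives no proof of this lemma; it simply attributes the statement to Hoste--Shanahan~\cite{HS} (in slightly different coordinates) and moves on to use it in Theorem~\ref{thm:longitude}. So there is nothing in the paper to compare against line by line.

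That said, your route is exactly the one underlying~\cite{HS}: exploit the relator $sw^m=w^mt$ to get the linear relations $p_{22}=(M^{-1}-M)p_{12}$ and $p_{21}=(2-v)p_{12}$ among the entries of $W^m$, and use peripheral commutativity $[\rho(l),S]=0$ (forcing $\rho(l)$ upper triangular when $M^2\neq 1$) to obtain the companion relation $q_{11}=(M-M^{-1})q_{12}$. The determinant trick $\det(W^m)=1$ then collapses $L=p_{11}q_{11}+p_{12}q_{21}$ to $-q_{12}/p_{12}=-\widetilde{W}_{12}/W_{12}$, and multiplying through by $2-v$ gives the stated identity. The only implicit genericity assumptions are $M^2\neq 1$, $2-v\neq 0$, and $p_{12}=W_{12}S_{m-1}(z)\neq 0$; these hold on the geometric component away from isolated points, and the resulting polynomial identity $W_{21}L+\widetilde{W}_{21}=0$ then extends by continuity, so nothing is lost. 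Your alternative justification of $T\widetilde{W}^m=\widetilde{W}^mS$ via the bridge-swapping symmetry $s\leftrightarrow t$ is also valid for these two-bridge knots, but the commutativity route you commit to is cleaner and self-contained.
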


\begin{theorem}
\label{thm:longitude}
\begin{align*}
L=-\frac{M^2 (S_n(v)-S_{n-1}(v))-(S_{n-1}(v)-S_{n-2}(v))}{(S_n(v)-S_{n-1}(v))-M^2 (S_{n-1}(v)-S_{n-2}(v))}.
\end{align*}
\end{theorem}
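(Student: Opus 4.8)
The plan is to feed Lemma~\ref{lem:lemma} into the explicit matrix entries already recorded for $W$ and $\widetilde{W}$, thereby reducing the claim to an elementary identity among Chebyshev polynomials. First I would rewrite Lemma~\ref{lem:lemma} as $L = -\tilde{W}_{21}/W_{21}$, valid on the locus where $W_{21} \neq 0$. The matrix forms of $W$ and $\widetilde{W}$ recorded above show that $W_{21} = (2-v)W_{12}$ and $\tilde{W}_{21} = (2-v)\tilde{W}_{12}$, so the factor $(2-v)$ cancels and
\begin{align*}
L = -\frac{\tilde{W}_{12}}{W_{12}}.
\end{align*}

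Next I would substitute the explicit expressions for $W_{12}$ and $\tilde{W}_{12}$. Each of them carries $S_{n-1}(v)$ as a common factor, so after removing it I obtain
\begin{align*}
L = -\frac{(M-M^{-1})S_n(v) + (M^{-1}v - M^{-1} - M)S_{n-1}(v)}{(M^{-1}-M)S_n(v) + (Mv - M - M^{-1})S_{n-1}(v)}.
\end{align*}
Clearing the negative powers of $M$ by multiplying numerator and denominator by $M$ yields
\begin{align*}
L = -\frac{(M^2-1)S_n(v) + (v - M^2 - 1)S_{n-1}(v)}{(1-M^2)S_n(v) + (M^2 v - M^2 - 1)S_{n-1}(v)}.
\end{align*}

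It then remains to match this with the asserted formula. Here I would invoke the Chebyshev recurrence in the form $S_{n-2}(v) = v S_{n-1}(v) - S_n(v)$, which gives $S_{n-1}(v) - S_{n-2}(v) = S_n(v) + (1-v)S_{n-1}(v)$. Substituting this into the numerator $M^2(S_n - S_{n-1}) - (S_{n-1} - S_{n-2})$ and the denominator $(S_n - S_{n-1}) - M^2(S_{n-1} - S_{n-2})$ of the claimed expression reproduces exactly the two bracketed combinations above, completing the identification.

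The computation itself is routine; the only point demanding care is that the two cancellations, of $(2-v)$ and of $S_{n-1}(v)$, are invalid at the discrete set of parameters where these quantities vanish. I would handle this by observing that, after clearing denominators, the desired equality is a polynomial identity in $M$ and $v$; establishing it on the open dense set where the cancellations hold forces it to hold identically, so that the formula for $L$ persists at the exceptional values by continuity.
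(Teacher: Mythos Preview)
Your proof is correct and follows essentially the same route as the paper: solve Lemma~\ref{lem:lemma} for $L=-\tilde{W}_{21}/W_{21}$, insert the explicit entries, and invoke the Chebyshev recurrence $vS_{n-1}(v)=S_n(v)+S_{n-2}(v)$ to match the stated form. The paper compresses all of this into a single sentence; your version merely unpacks the algebra and adds the (harmless, if somewhat overscrupulous) continuity remark covering the locus where $(2-v)S_{n-1}(v)=0$.
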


\begin{proof}
By directly computing $W_{21} L+\tilde{W}_{21}=0$ in Lemma~\ref{lem:lemma} and substituting $S_n(v)+S_{n-2}(v)$ for $v S_{n-1}(v)$, the theorem follows.
\end{proof}
%%%%%%%%%%%%%%%%%%%%%%%%%%%%%%%%%%%%%%%%%%%%%%%%%%%%%%%%
%%%%%%%%%%%%%%%%%%%%%%%%%%%%%%%%%%%%%%%%%%%%%%%%%%%%%%%%
\section{Schl\"{a}fli formula for the generalized Chern-Simons function}
\label{sec:CSfunction}

The general references for this section are~\cite{HLM3,HLM2,Y1,MeyRub1,HL,HL2} and~\cite{HLMR}. 

In~\cite{HLM3}, Hilden, Lozano, and Montesinos-Amilibia defined the \emph{generalized Chern-Simons function} on the oriented cone-manifold structures which matches up with the Chern-Simons invariant when the cone-manifold is the Riemannian manifold.

Below, we briefly introduce the generalized Chern-Simons function on the family of $C(2n,2m)$ cone-manifold structures. For an oriented knot $C(2n,2m)$, we orient its chosen meridian $s$ such that the orientation of $s$ followed by the orientation of $C(2n,2m)$ coincides with orientation of $\mathbb{S}^3$. Here, we use the definition of the lens space in~\cite{HLM2} so that we can have the right orientation when it is combined with the following frame field. On the Riemannian manifold $\mathbb{S}^3-C(2n,2m)-s$ we choose a \textit{special} frame field 
$\Gamma=(e_1,e_2,e_3)$ which is an orthonormal frame field such that for each point $x$ near $C(2n,2m)$, $e_1(x)$ has the direction given by knot's orientation, $e_2(x)$ has the tangent direction of the meridian curve, and $e_3(x)$ has the knot to point $x$ direction.  Such a special frame field always exists by Proposition $3.1$ of~\cite{HLM3}. From $\Gamma$ we obtain an orthonormal frame field 
$\Gamma_{\alpha}$ on $X_{2n}^{2m} \!  (\alpha)-s$ by the Gram-Schmidt orthogonalization process with respect to the Riemannian structure of the cone manifold $X_{2n}^{2m} \!  (\alpha)$. Moreover, it can be made special by deforming it in a neighbourhood of the singular set $C(2n,2m)$ and $s$, if necessary. Thus, $\Gamma^{\prime}$ is an extension of $\Gamma$ to $S^3-C(2n,2m)$. To the cone-manifold $X_{2n}^{2m} \!  (\alpha)$, we assign a real number

\begin{equation*}
I\left(X_{2n}^{2m} \!  (\alpha)\right)=\frac{1}{2} \int_{\Gamma(S^3-T_{2n}-s)}Q-\frac{1}{4 \pi} \tau(s,\Gamma^{\prime})-\frac{1}{4 \pi} \left(\frac{\beta \alpha}{2 \pi}\right),
\end{equation*}

\noindent where $-2 \pi \leq \beta \leq 2 \pi$ is the angle of the lifted holonomy of the singular locus of $X_{2n}^{2m} \!  (\alpha)$, $Q$ is the Chern-Simons form:

\begin{equation*}
Q=\frac{1}{4 \pi^2} \left(\theta_{12} \wedge \theta_{13} \wedge \theta_{23} + \theta_{12} \wedge \Omega_{12} + \theta_{13} \wedge \Omega_{13} + \theta_{23} \wedge \Omega_{23} \right),
\end{equation*}

\noindent and 

\begin{equation*}
\tau(s,\Gamma^{\prime})=-\int_{\Gamma^{\prime}(s)} \theta_{23},
\end{equation*}

\noindent where ($\theta_{ij}$) is the connection $1$-form, ($\Omega_{ij}$) is the curvature $2$-form of the Riemannian connection on $X_{2n}^{2m} \!  (\alpha)$ and the integral is over the orthonormalizations of the same frame field. When $\alpha = \frac{2 \pi}{k}$ for some positive integer, 
$I \left(X_{2n}^{2m} \! \! \left(\frac{2 \pi}{k}\right)\right)$ (mod $\frac{1}{k}$ if $k$ is even or mod $\frac{1}{2k}$ if $k$ is odd) is independent of the frame field $\Gamma$ and of the representative in the equivalence class $\overline{\beta}$ and hence becomes an invariant of the orbifold $X_{2n}^{2m} \! \! \left(\frac{2 \pi}{k}\right)$. The quantity $I \left(X_{2n}^{2m} \! \! \left(\frac{2 \pi}{k}\right)\right)$ (mod $\frac{1}{k}$ if $k$ is even or mod $\frac{1}{2k}$ if $k$ is odd) is called \emph{the Chern-Simons invariant of the orbifold} and is denoted by 
$\text{\textnormal{cs}} \left(X_{2n}^{2m} \! \! \left(\frac{2 \pi}{k}\right) \right)$.

We have the following ``Schl\"{a}fli formula'' for the generalized Chern-Simons function on the family of $C(2n,2m)$ cone-manifold structures.

\begin{theorem}(~\cite[Theorem 1.2]{HLM2})~\label{theorem:schlafli}
For a family of geometric cone-manifold structures, $X_{2n}^{2m} \!  (\alpha)$, and differentiable functions $\alpha(t)$ and $\beta(t)$ of $t$ we have
\begin{equation*}
dI \left(X_{2n}^{2m} \!  (\alpha)\right)=-\frac{1}{4 \pi^2} \beta d \alpha.
\end{equation*} 
\end{theorem}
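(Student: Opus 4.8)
The statement is the Chern-Simons analogue of the classical Schl\"afli volume formula, so the plan is to prove it by the variational scheme of Hilden, Lozano, and Montesinos-Amilibia~\cite{HLM2,HLM3}. First I would realize the generalized Chern-Simons function as $I(X_{2n}^{2m}(\alpha)) = \frac{1}{8\pi^2}\int_{X\setminus N_\epsilon}\operatorname{tr}(A\wedge dA + \tfrac{2}{3}A\wedge A\wedge A)$, where $A = A_t$ is the flat $\mathrm{sl}_2(\C)$-connection carried by the holonomy $\rho_{\alpha(t)}$ of the cone structure and $N_\epsilon$ is a tubular neighborhood of the singular geodesic $T_{2n}^{2m}$ removed so that the integrand is everywhere defined. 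Differentiating in $t$, the standard first-variation formula for the Chern-Simons integral produces a bulk term proportional to $\int_{X\setminus N_\epsilon}\operatorname{tr}(\dot A_t\wedge F_{A_t})$ together with a boundary term on $\partial(X\setminus N_\epsilon)=\partial N_\epsilon$; since each $A_t$ is flat the curvature $F_{A_t}$ vanishes, the bulk term drops out, and $\frac{d}{dt}I$ becomes a pure boundary integral over the torus $\partial N_\epsilon$ encircling the singular knot. This localization, resting entirely on flatness, is the structural core of the argument.

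Next I would evaluate that boundary integral in a meridian--longitude framing of $\partial N_\epsilon$. With the normalization $M=e^{i\alpha/2}$ fixed in Section~\ref{sec:longitude}, the meridian holonomy contributes $d\log M = \tfrac{i}{2}\,d\alpha$, while the longitude holonomy enters through its eigenvalue $L$ computed in Theorem~\ref{thm:longitude}. The antisymmetric pairing coming from $\operatorname{tr}(\dot A\wedge A)$ on the torus collapses to a single multiple of $\beta\,d\alpha$, where $\beta$ is the relevant part of the complex length $\gamma_\alpha$ of the longitude; taking the limit $\epsilon\to 0$ and tracking the universal normalizing constant yields $dI = -\frac{1}{4\pi^2}\beta\,d\alpha$. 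Because both sides are $1$-forms pulled back along $t\mapsto\alpha(t)$, the identity for one geometric family gives it for arbitrary differentiable $\alpha(t)$ and $\beta(t)$, which is the form in which the theorem is stated.

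The hard part is the analysis near the singular locus, since $A_t$ does not extend smoothly across the singular geodesic. Both the reduction to $\partial N_\epsilon$ and the evaluation of the $\epsilon\to 0$ limit must be done carefully, isolating the elliptic (rotational) part of the meridian holonomy that records the cone angle $\alpha$ and verifying that the potentially divergent contributions from the shrinking tube cancel against the geometry of the core geodesic. This is exactly the step at which the real Schl\"afli identity $dV = -\tfrac12\,\ell_\alpha\,d\alpha$ for the singular geodesic is upgraded to its complex Chern-Simons counterpart, and it is what pins down both the constant $\frac{1}{4\pi^2}$ and the precise identification of $\beta$. Since Theorem~\ref{theorem:schlafli} is quoted from~\cite{HLM2}, in the paper itself I would simply invoke that reference and use the formula as the engine driving the integration in Theorem~\ref{theorem:main}.
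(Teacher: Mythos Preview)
The paper supplies no proof of this theorem at all: it is stated with the attribution ``(Theorem 1.2 of~\cite{HLM2})'' and immediately used. Your final sentence anticipates exactly this, so your bottom line matches the paper; the variational sketch you give is a reasonable outline of the argument in~\cite{HLM2,HLM3} but goes beyond anything the present paper contains.
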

%%%%%%%%%%%%%%%%%%%%%%%%%%%%%%%%%%%%%%%%%%%%%%%%%%%%%%%%
%%%%%%%%%%%%%%%%%%%%%%%%%%%%%%%%%%%%%%%%%%%%%%%%%%%%%%%%

\section{Proof of the theorem~\ref{theorem:main}} \label{sec:proof}

For $n \geq 1$ and $M=e^{i \frac{\alpha}{2}}$, $\phi_{2n}^{2m}(x,M)$ have $2mn$ component zeros. The component which passes through $$(x_1,x_2)=\left(2-2 \cos{\left(\frac{\pi(2m+1)}{4nm+1}\right)},2-2 \cos{\left(\frac{\pi(2m-1)}{4nm+1}\right)}\right)$$ at $\alpha=\pi$ is the geometric component by ~\cite[Theorem 2.1]{HLM2}. Note that $2-x_1>0$ and $2-x_2>0$. For each $C(2n,2m)$, there exists an angle $\alpha_0 \in [\frac{2\pi}{3},\pi)$ such that $C(2n,2m)$ is hyperbolic for $\alpha \in (0, \alpha_0)$, Euclidean for 
$\alpha=\alpha_0$, and spherical for $\alpha \in (\alpha_0, \pi]$ \cite{P2,HLM1,K1,PW}.
Denote by $D(X_{2n}^{2m} \!  (\alpha))$ be the set of  zeros of the discriminant of 
$\phi_{2n}^{2m}(x,e^{i \frac{\alpha}{2}})$ over $x$. Then $\alpha_0$ will be one of $D(X_{2n}^{2m} \!  (\alpha))$. 

 On the geometric component we can calculate
 the Chern-Simons invariant of an orbifold 
$X_{2n}^{2m}(\frac{2 \pi}{k})$ (mod $\frac{1}{k}$ if $k$ is even or mod $\frac{1}{2k}$ if $k$ is odd), where $k$ is a positive integer such that $k$-fold cyclic covering of $X_{2n}^{2m}(\frac{2 \pi}{k})$ is hyperbolic:
\begin{align*}
& \text{\textnormal{cs}}\left(X_{2n}^{2m} \left(\frac{2 \pi}{k} \right)\right) 
                        \equiv I \left(X_{2n}^{2m} \left(\frac{2 \pi}{k} \right)\right) 
                      \ \ \ \ \ \ \ \ \ \ \ \  \left(\text{mod} \ \frac{1}{k}\right) \\
                        & \equiv I \left(X_{2n}^{2m}( \pi) \right)
                          +\frac{1}{4 \pi^2}\int_{\frac{2 \pi}{k}}^{\pi} \beta \: d\alpha 
                      \ \ \ \ \ \ \ \ \ \ \ \ \  \ \ \ \ \ \ \  \left(\text{mod} \ \frac{1}{k}\right) \\
                        & \equiv \frac{1}{2} \text{\textnormal{cs}}\left(L(4nm+1,2n(2m-1)+1) \right) \\
                        &+\frac{1}{4 \pi^2}\int_{\frac{2 \pi}{k}}^{\alpha_0} Im \left(2*\log \left(-\frac{M^2 (S_n(v)-S_{n-1}(v))-(S_{n-1}(v)-S_{n-2}(v))}{(S_n(v)-S_{n-1}(v))-M^2 (S_{n-1}(v)-S_{n-2}(v))}\right)\right) \: d\alpha \\
& +\frac{1}{4 \pi^2}\int_{\alpha_0}^{\pi}
 Im \left(\log \left(-\frac{M^2 (S_n(v_1)-S_{n-1}(v_1))-(S_{n-1}(v_1)-S_{n-2}(v_1))}{(S_n(v_1)-S_{n-1}(v_1))-M^2 (S_{n-1}(v_1)-S_{n-2}(v_1))}\right) \right) \: d\alpha \\
& +\frac{1}{4 \pi^2}\int_{\alpha_0}^{\pi}
 Im \left(\log \left(-\frac{M^2 (S_n(v_2)-S_{n-1}(v_2))-(S_{n-1}(v_2)-S_{n-2}(v_2))}{(S_n(v_2)-S_{n-1}(v_2))-M^2 (S_{n-1}(v_2)-S_{n-2}(v_2))}\right)\right) \: d\alpha\\
& \left( \text{mod} \ \frac{1}{k}\ \text{if $k$ is even or }  \text{mod} \ \frac{1}{2k}\ \text{if $k$ is odd} \right)
\end{align*}
where the second equivalence comes from Theorem~\ref{theorem:schlafli} and the third equivalence comes from the fact that 
$I \left(X_{2n}^{2m}(\pi)\right) \equiv \frac{1}{2} \text{\textnormal{cs}}\left(L(4nm+1,2n(2m-1)+1) \right)$  
$\left(\text{mod }\frac{1}{2}\right)$, Theorem~\ref{thm:longitude}, and geometric interpretations of hyperbolic and spherical holonomy representations.

The following theorem gives the Chern-Simons invariant of the Lens space $L(4nm+1,2n(2m-1)+1)$.

\begin{theorem}(~\cite[Theorem 1.3]{HLM2}) \label{theorem:Lens}
\begin{align*}
\text{\textnormal{cs}} \left(L \left(4nm+1,2n(2m-1)+1\right)\right) \equiv \frac{m-n}{4n m+1} && (\text{mod}\ 1).
\end{align*}
\end{theorem}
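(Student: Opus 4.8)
The plan is to compute the Riemannian Chern--Simons invariant of $L(p,q)$, with $p=4nm+1$ and $q=2n(2m-1)+1$, directly from its spherical holonomy, independently of the deformation argument used in the hyperbolic range. First I would realize $L(p,q)$ as the quotient $S^3/\Z_p$ carrying the round metric, where a generator $g$ of $\Z_p$ acts on $S^3\subset\C^2$ by $(z_1,z_2)\mapsto(\zeta z_1,\zeta^q z_2)$ with $\zeta=e^{2\pi i/p}$. Identifying $S^3$ with $\mathrm{SU}(2)$ and writing the orientation-preserving isometry group as $\mathrm{SO}(4)=(\mathrm{SU}(2)_L\times\mathrm{SU}(2)_R)/\{\pm 1\}$, the isometry $g$ is a simultaneous left--right multiplication, so the holonomy $\rho\colon\Z_p\to\mathrm{SO}(4)$ sends $g$ to the pair of diagonal matrices with entries $e^{\pm\pi i(1-q)/p}$ and $e^{\pm\pi i(1+q)/p}$. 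Thus the two $\mathrm{SU}(2)$ factors carry ``rotation numbers'' $a_L\equiv(1-q)/2$ and $a_R\equiv(1+q)/2\pmod p$, both integers since $q$ is odd here.

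The second step is to read off the geometric Chern--Simons invariant from these two factors. The spherical Chern--Simons invariant is determined by the holonomy and splits as a signed sum of the contributions of the two $\mathrm{SU}(2)$ factors. For a flat $\mathrm{SU}(2)$-connection on $L(p,q)$ whose holonomy around $g$ is conjugate to $\mathrm{diag}(e^{2\pi i a/p},e^{-2\pi i a/p})$, the Chern--Simons invariant is $-q^{*}a^{2}/p\pmod 1$, where $qq^{*}\equiv 1\pmod p$. Combining the two factors with the orientation convention of \cite{HLM2} yields
\[
\mathrm{cs}\,(L(p,q))\equiv\frac{q^{*}\left(a_L^{2}+a_R^{2}\right)}{p}\pmod 1 .
\]

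I would then finish with an elementary arithmetic reduction. With $p=4nm+1$ one checks directly, using $4nm\equiv-1\pmod p$, that $q^{*}\equiv 2m\pmod p$, while $a_L=-n(2m-1)$ and $a_R=n(2m-1)+1$, so that $a_L^{2}+a_R^{2}=2n^{2}(2m-1)^{2}+2n(2m-1)+1$. Substituting $q^{*}\equiv 2m$ and repeatedly applying $4nm\equiv-1\pmod p$ collapses $q^{*}(a_L^{2}+a_R^{2})$ to $m-n\pmod p$, which gives $\mathrm{cs}\,(L(p,q))\equiv(m-n)/(4nm+1)\pmod 1$ as claimed. As a sanity check, $a_R+a_L=1$ and $a_R-a_L=q$, so $a_R^{2}-a_L^{2}=q$ and hence $q^{*}(a_R^{2}-a_L^{2})\equiv 1\pmod p$, consistent with the well-known fact that the Chern--Simons invariants of the two factors differ by $1/p$.

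The main obstacle is the second step: pinning down the precise normalization and sign with which the Riemannian Chern--Simons invariant of the spherical structure decomposes into the two $\mathrm{SU}(2)$ Chern--Simons invariants, i.e.\ matching the $\mathrm{SO}(4)$-framing and orientation conventions of \cite{HLM2} so that the signed \emph{sum}, rather than the difference, appears and with the correct overall sign. Once that identification is fixed, the first and third steps are routine. Since the statement is exactly Theorem 1.3 of \cite{HLM2}, within the present paper it suffices to invoke that reference; the sketch above records the self-contained route.
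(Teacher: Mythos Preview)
The paper does not prove this statement at all: Theorem~\ref{theorem:Lens} is quoted verbatim as ``Theorem~1.3 of~\cite{HLM2}'' and used as a black box in Section~\ref{sec:proof}, so there is no proof in the paper to compare against. Your proposal therefore goes well beyond what the paper does, and your final remark that ``within the present paper it suffices to invoke that reference'' is exactly right.

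That said, your sketch is a sound outline of how the result in \cite{HLM2} is actually obtained, and the arithmetic in your third step checks: with $p=4nm+1$ one has $q\cdot 2m=(4nm)(2m-1)+2m\equiv -(2m-1)+2m=1\pmod p$, so $q^{*}\equiv 2m$; and
\[
2m\bigl(2n^{2}(2m-1)^{2}+2n(2m-1)+1\bigr)\equiv -n(2m-1)^{2}+1\equiv m-n\pmod{p},
\]
using $4nm\equiv -1$ twice. The one genuine soft spot is the one you flag yourself: the passage from the $\mathrm{SO}(4)$ holonomy of the round metric to the Riemannian Chern--Simons invariant via the two $\mathrm{SU}(2)$ factors requires fixing a specific sign and normalization (sum versus difference, and the overall sign in $-q^{*}a^{2}/p$), and this is not derivable from first principles in a line---it is precisely the content supplied by \cite{HLM2} (and, in other conventions, by Kirk--Klassen \cite{KK1,KK2}). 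So your plan is correct modulo that identification, and citing \cite{HLM2} for it is both honest and, in this paper's context, sufficient.
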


%%%%%%%%%%%%%%%%%%%%%%%%%%%%%%%%%%%%%%%%%%%%%%%%%%%%%%%%%
%%%%%%%%%%%%%%%%%%%%%%%%%%%%%%%%%%%%%%%%%%%%%%%%%%%%%%%%%
\section{Chern-Simons invariants of the hyperbolic $C(2n,2m)$ knot orbifolds and of its cyclic coverings}

The table~\ref{tab1} gives the approximate Chern-Simons invariant of 
$C(2n,2m)$ for $n$ between $1$ and $4$, $m$ between $1$ and $4$ with $n \geq m$. Since $C(2,2)$, $C(4,4)$, $C(6,6)$, $C(8,8)$ are amphicheiral knots, their Chern-Simons invariants are $zero$ as expected.  We used Simpson's rule for the approximation with $2 \times 10^4$ ($10^4$ in Simpson's rule) intervals from $0$ to $\alpha_0$ and $2 \times10^4$ ($10^4$ in Simpson's rule) intervals from $\alpha_0$ to $\pi$. 
The table~\ref{table1-1} gives the approximate Chern-Simons invariant of the hyperbolic orbifold, 
$\text{\textnormal{cs}} \left(X_{2n}^{2m} (\frac{2 \pi}{k})\right)$ for $n$ between $1$ and $4$, $m$ between $1$ and $4$ with 
$n \geq m$, and for $k$ between $3$ and $10$, and of its cyclic covering, $\text{\textnormal{cs}} \left(M_k (X_{2n}^{2m})\right)$   except amphicheiral knots. We used Simpson's rule for the approximation with $2 \times10^2$ ($10^2$ in Simpson's rule) intervals from $2 \pi/k$ to $\alpha_0$ and $2 \times 10^2$ ($10^2$ in Simpson's rule) intervals from $\alpha_0$ to $\pi$. 

We used Mathematica for the calculations. We record here that our data in Table~\ref{tab1} and those obtained from  SnapPy~\cite{SnapPy} match up up to existing decimal points and our data in Table~\ref{table1-1}. For computational reasons, we need $\alpha_0$. $\alpha_0$ is the bifercation point of the geometric solution of the Riley-Mednykh polynomial as described in Theorem~\ref{theorem:main}.

%%%%%%%%%%%%%%%%%%%%%%%%%%%%%%%%%%%%%%%%%%%%%%%%%%%%%%%%%%
\begin{table}[h] 
\begin{tabular}{|c|c|c|c|}
\hline
2n & 2m & $\alpha_0$ & $\text{\textnormal{cs}}\left(X_{2n}^{2m}\right)$  \\
\hline
2 & 2 & 2.094395102393195 & 0\text{} \\
4 & 2 & 2.574140778131840 & 0.34402298 \text{} \\
6 & 2 & 2.750685152010280 & 0.27786688 \text{} \\
8 & 2 & 2.843209532683532 & 0.24222232 \text{} \\
4 & 4 & 2.847642272262783 & 0 \text{} \\
6 & 4 & 2.942465754372979 & 0.42782933\text{} \\
8 & 4 & 2.990939179603150 & 0.38923730\text{} \\
6 & 6 & 3.007517657179940 & 0 \text{} \\
8 & 6 & 3.040474611156828 &  0.46103929\text{} \\
8 & 8 & 3.065453796328835 & 0 \text{} \\
\hline
\end{tabular}
\bigskip
\caption{Chern-Simons invariant of $X_{2n}^{2m}$ for $n$ between $1$ and $4$, $m$ between $1$ and $4$ with $n \geq m$ except amphicheiral knots.}
\label{tab1}
\end{table}

%%%%%%%%%%%%%%%%%%%%%%%%%%%%%%%%%%%%%%%%%%%%%%%%%%%%%%%%%%%%%%%%%%%%%%%%%%

\begin{table}
\begin{tabular}{ll}
\begin{tabular}{|c|c|c|}
\hline
 $k$ & $\text{\textnormal{cs}} \left(X_4^2(\frac{2 \pi}{k})\right)$ & $\text{\textnormal{cs}} \left(M_k( X_4^2)\right)$ \\
\hline
 3 & 0.0875301 \text{} & 0.26259 \text{} \\
 4 & 0.144925 & 0.579699 \\
 5 & 0.0784576 \text{} & 0.392288 \text{} \\
 6 & 0.0351571 & 0.210943 \\
 7 & 0.00506505 \text{} & 0.0354553 \text{} \\
 8 & 0.108039 & 0.864313 \\
 9 & 0.0218112 \text{} & 0.196301 \text{} \\
 10 & 0.0530574 & 0.530574 \\
\hline
\end{tabular}
&
\begin{tabular}{|c|c|c|}
\hline
 $k$ &  $\text{\textnormal{cs}} \left(X_6^2(\frac{2 \pi}{k})\right)$ & $\text{\textnormal{cs}} \left(M_k (X_6^2)\right)$ \\
\hline
 3 & 0.0449535 \text{} & 0.13486 \text{} \\
 4 & 0.0876043 & 0.350417 \\
 5 & 0.0165337 \text{} & 0.0826684 \text{} \\
 6 & 0.138167 & 0.829004 \\
 7 & 0.0120078 \text{} & 0.0840545 \text{} \\
 8 & 0.0430876 & 0.3447 \\
 9 & 0.012125 \text{} & 0.109125 \text{} \\
 10 & 0.0876213 & 0.876213 \\
\hline
\end{tabular}
\end{tabular}

\bigskip

\begin{tabular}{ll}
\begin{tabular}{|c|c|c|}
\hline
 $k$ &   $\text{\textnormal{cs}} \left(X_8^2(\frac{2 \pi}{k})\right)$ & $\text{\textnormal{cs}} \left(M_k (X_8^2)\right)$ \\
\hline
 3 & 0.0161266 \text{} & 0.0483799 \text{} \\
 4 & 0.0536832 & 0.214733 \\
 5 & 0.0817026 \text{} & 0.408513 \text{} \\
 6 & 0.103012 & 0.618074 \\
 7 & 0.0481239 \text{} & 0.336867 \text{} \\
 8 & 0.00768503 & 0.0614802 \\
 9 & 0.032221 \text{} & 0.289989 \text{} \\
 10 & 0.0521232 & 0.521232 \\
\hline
\end{tabular}
&
\begin{tabular}{|c|c|c|}
\hline
 $k$ &  $\text{\textnormal{cs}} \left(X_6^4 (\frac{2 \pi}{k})\right)$ & $\text{\textnormal{cs}} \left( M_k (X_6^4)\right)$ \\
\hline
3 & 0.125912 \text{} & 0.377736 \text{} \\
 4 & 0.192764 \text{} & 0.771058 \text{} \\
 5 & 0.0360431 \text{} & 0.180216 \text{} \\
 6 & 0.0996796 \text{} & 0.598077 \text{} \\
 7 & 0.00284328 \text{} & 0.0199029 \text{} \\
 8 & 0.0554674 \text{} & 0.443739 \text{} \\
 9 & 0.0409685 \text{} & 0.368717 \text{} \\
 10 & 0.0294401 \text{} & 0.294401 \text{} \\
\hline
\end{tabular}
\end{tabular}

\bigskip

\begin{tabular}{ll}
\begin{tabular}{|c|c|c|}
\hline
 $k$ &  $\text{\textnormal{cs}} \left(X_8^4(\frac{2 \pi}{k})\right)$ & $\text{\textnormal{cs}} \left(M_k (X_8^4)\right)$ \\
\hline
 3 & 0.098074 \text{} & 0.294222 \text{} \\
 4 & 0.157843 \text{} & 0.631371 \text{} \\
 5 & 0.0993608 \text{} & 0.496804 \text{} \\
 6 & 0.0622858 \text{} & 0.373715 \text{} \\
 7 & 0.0365103 \text{} & 0.255572 \text{} \\
 8 & 0.0174882 \text{} & 0.139906 \text{} \\
 9 & 0.00284881 \text{} & 0.0256393 \text{} \\
 10 & 0.091224 \text{} & 0.91224 \text{} \\
\hline
\end{tabular}
&
\begin{tabular}{|c|c|c|}
\hline
 $k$ &  $\text{\textnormal{cs}} \left(X_8^6 (\frac{2 \pi}{k})\right)$ & $\text{\textnormal{cs}} \left(M_k (X_8^6)\right)$ \\
\hline
3 & 0.138854 \text{} & 0.416562 \text{} \\
 4 & 0.214725 \text{} & 0.858898 \text{} \\
 5 & 0.0628859 \text{} & 0.31443 \text{} \\
 6 & 0.128841 \text{} & 0.773046 \text{} \\
 7 & 0.0332457 \text{} & 0.23272 \text{} \\
 8 & 0.0866094 \text{} & 0.692875 \text{} \\
 9 & 0.0170324 \text{} & 0.153291 \text{} \\
10 & 0.0613865 \text{} & 0.613865 \text{} \\
\hline
\end{tabular}
\end{tabular}

\bigskip

\caption{Chern-Simons invariant of the hyperbolic orbifold, $\text{\textnormal{cs}} \left(X_{2n}^{2m} (\frac{2 \pi}{k})\right)\left( \text{mod} \ \frac{1}{k}\ \text{if $k$ is even or }  \text{mod} \ \frac{1}{2k}\ \text{if $k$ is odd} \right)$ for $n$ between $1$ and $4$, $m$ between $1$ and $4$ with $n \geq m$, and for $k$ between $3$ and $10$, and of its cyclic covering, 
$\text{\textnormal{cs}} \left(M_k (X_{2n}^{2m})\right)$  except amphicheiral knots.}
\label{table1-1}
\end{table}

Note that the Chern-Simons invariant of the hyperbolic orbifold  $\text{\textnormal{cs}} \left(X_{2n}^{2m} (\frac{2 \pi}{k})\right)$ is only defined modulo $\frac{1}{k}$~\cite[Theorem 1.4]{HLM2} and  we only get modulo $\frac{1}{2k}$ for $k$ odd~\cite[Theorem 1.4]{HLM2}.

%===============================================================================
%   Bibliography
%===============================================================================

\bibliographystyle{pigc_plain}
\bibliography{conereference}
\begin{comment}

\end{comment}
%===============================================================================
%   Information about authors
%===============================================================================

\printArticleAuthorsInfo{\thearticlesnum}

\end{document}